\documentclass[11pt,reqno]{amsart}

\usepackage{subfigure}
\usepackage{amsthm}
\usepackage{paralist}
\usepackage{enumitem}
\usepackage{pifont}
\usepackage[utf8]{inputenc}
\usepackage[cal=boondox]{mathalfa}

\newtheorem{lemma}{Lemma}
\newtheorem{theorem}{Theorem}[section]
\newtheorem{definition}{Definition}

\newtheorem{remark}{Remark}[section]

\newcommand{\xd}{\textrm{d}}

\setlength{\topmargin}{-0.5in}
\setlength{\oddsidemargin}{0.3in}
\setlength{\evensidemargin}{0.3in}
\setlength{\textheight}{8.5in}
\setlength{\textwidth}{6.0in}
\setlength{\footskip}{0.7in}
\setlength{\parindent}{0em}

\title[From nonlocal to local Cahn-Hilliard]{From nonlocal to local Cahn-Hilliard equation}
\author[Stefano Melchionna]{Stefano Melchionna}
\address{Institut f\"ur Mathematik, University of Vienna, Oskar-Morgenstern-Platz 1, 1090 Vienna, Austria}
\email{melchionna.s90@gmail.com}
\author[Helene Ranetbauer]{Helene Ranetbauer}
\address{Institut f\"ur Mathematik, University of Vienna, Oskar-Morgenstern-Platz 1, 1090 Vienna, Austria}
\email{helene.ranetbauer@univie.ac.at}
\author[Luca Scarpa]{Luca Scarpa}
\address{Institut f\"ur Mathematik, University of Vienna, Oskar-Morgenstern-Platz 1, 1090 Vienna, Austria}
\email{luca.scarpa@univie.ac.at}
\author[Lara Trussardi]{Lara Trussardi}
\address{Institut f\"ur Mathematik, University of Vienna, Oskar-Morgenstern-Platz 1, 1090 Vienna, Austria}
\email{lara.trussardi@univie.ac.at}

\keywords{Local Cahn-Hilliard equation, nonlocal Cahn-Hilliard equation, asymptotic analysis, periodic boundary conditions, constant mobility}
\subjclass[2010]{45K05, 35K25, 35D30, 35K55}
\begin{document}

\begin{abstract}
In this paper we prove the convergence of a nonlocal version of the Cahn-Hilliard equation to its local counterpart as the nonlocal convolution kernel is scaled using suitable approximations of a Dirac delta in a periodic boundary conditions setting. This convergence result strongly relies on the dynamics of the problem. More precisely, the $H^{-1}$-gradient flow structure of the equation allows to deduce uniform $H^1$ estimates for solutions of the nonlocal Cahn-Hilliard equation and, together with a Poincar\'e type inequality by Ponce, provides the compactness argument that allows to prove the convergence result.
\end{abstract}

\maketitle

\section{Introduction}
\label{sec:intro}
\indent 
The Cahn-Hilliard equation~\cite{Ca, CH} is widely used in the study of phase field models as well as diffuse interface theory and it was developed to describe the evolution of the concentration of two components in a binary fluid. This equation typically arises in connection with phase transitions which occur when a substance changes from a state (e.g. solid, liquid, or gas) into a different one exhibiting different properties.

There are several examples for this kind of phenomena: the condensation of water drops in mist, a homogeneous molten binary alloy that is rapidly cooled, mixtures in general (two metallic, polymer or glassy components) as well as pattern formation. However, the Cahn-Hilliard equation is also relevant in many other applications like image processing~\cite{capuzzo2002area}, population dynamics~\cite{cohen1981generalized} or even the formation of Saturn rings~\cite{tremaine2003origin}.\\
In the literature two types of models have been proposed to study phase transitions: sharp-interface and phase-field models. Where sharp-interface models describe the interface as a $(d-1)$-dimensional hypersurface, phase-field models replace the sharp interface by a thin transition region in which a mixture of the two components is present. 

Originally, the Cahn-Hilliard equation was introduced for modelling the phenomena of spinodal decomposition, i.e. the loss of mixture homogeneity and the formation of pure phase regions, and coarsening dynamics, which is the aggregation of pure phase regions into larger domains. The model exhibits a gradient-flow structure (in the $H^{-1}$-metric) in terms of the free energy functional given by, cf.~\cite{CH},
\begin{equation}
\label{eq:enfunCH}
E_{CH}(u(x))=\int_{\Omega}\Bigl(\frac{\tau^{2}}{2}|\nabla u(x)|^{2}+F(u(x))\Bigr) \,\xd x.
\end{equation}
Note that $\tau$ is a small positive parameter related to the transition region thickness. In this paper, $\Omega$ denotes the $d$-dimensional ($d=3$) flat torus and $F$ is a double well potential with two global minima representing the pure phases and with second derivatives bounded from below. 
The most natural choice for $F$ is the free energy density 
obtained through the principles of statistical mechanics,
defined by
\[
  F_0(s):=\theta_0s(1-s) + \theta[s\log s + (1-s)\log(1-s)]\,,
  s\in(0,1)\,,
\]
where the constants $0<\theta<\theta_0$ are related to 
the temperature of the system and the Boltzmann constant.
A usual polynomial approximation
of $F_0$ is in the form
\[
  F_P(s):=A_1s^4 - A_2 s^2\,,
\]
where $A_1$ and $A_2$ are positive constants depending on $\theta_0$ and $\theta$.
The corresponding evolution problem is given by the $H^{-1}$-gradient flow with respect to the energy functional~\eqref{eq:enfunCH}
\begin{align}\label{eq:CH}
\begin{aligned}
\frac{\partial u}{\partial t}+\nabla\cdot J_{CH} & =0\text{,}\\ 
J_{CH} & =-\mu(u)\nabla v_{CH}\text{,}\\
v_{CH} & =\frac{\delta E_{CH}(u)}{\delta u}=-\tau^{2}\Delta u+F^{\prime}(u)\text{.}
\end{aligned}
\end{align}
The function $\mu(\cdot)$ in~\eqref{eq:CH} is known as mobility. 

\noindent
Even though in the existing literature the Cahn-Hilliard equation has been studied intensively and also successfully, it still cannot be rigorously derived as a macroscopic limit of microscopic models for interacting particles. 
A nonlocal version of the equation, proposed by Giacomin and Lebowitz~\cite{GL}, attracted great interest in recent years. They considered the hydrodynamic limit of such a microscopic model and derived a nonlocal energy functional of the form
\begin{align} \label{eq:enfunGL}
E_{NL}(u(x))&=\frac{1}{4}\int_{\Omega}\int_{\Omega}K(x,y)(u(x)-u(y))^{2}\mathrm{d}
x\xd y+ \int_{\Omega}F(u(x))\xd x,
\end{align}
where $K(x,y)$ is a positive and symmetric convolution kernel.
The associated evolution problem is a nonlocal variant of the Cahn-Hilliard system
\begin{align}\label{eq:NLCH}
\begin{aligned}
\frac{\partial u}{\partial t}+\nabla\cdot J_{NL} & =0\text{,}\\ 
J_{NL} & =-\mu(u)\nabla v_{NL}\text{,}\\
v_{NL} & =\frac{\delta E_{NL}(u)}{\delta u}=(K*1) u-K* u+ F'(u) \text{,}
\end{aligned}
\end{align}
where $(K*1)(x):=\int_{\Omega}K(x,y)\xd y$ and $(K*u)(x):=\int_\Omega K(x,y)u(y)\, \xd y$.
Being a choice often considered in the existing literature, we take a constant mobility and, for simplicity, we set $\mu=1$ both in~\eqref{eq:CH} and~\eqref{eq:NLCH}.

Note that the local Cahn-Hilliard system~\eqref{eq:CH} is a fourth order PDE, whereas the nonlocal one~\eqref{eq:NLCH} is an integro-differential second order parabolic equation. However, they share a lot of fundamental features ranging from the underlying gradient flow structure, the lack of comparison principles, the separation of the solution from the pure phases~\cite{CMZ, LP2, gal2017nonlocal} to the long time behaviour~\cite{LP1}.
Moreover, both energy functionals allow the same $\Gamma$-limit for vanishing interface thickness (see~\cite{AB, MM} and~\cite{GNS, Le} for the sharp interface limit of the local Cahn-Hilliard equation).

In this paper the local concentration of one of the two components is represented by a real valued function $u=u(x)$. The pure phases are chosen as $0$ and $1$. Compared to sharp-interface models, we neither have to worry about complicated boundary conditions across the interface nor being concerned with regularity issues.\\
We are interested in proving convergence of weak solutions of the nonlocal Cahn-Hilliard equation~\eqref{eq:NLCH} to weak solutions of the local version~\eqref{eq:CH} as the convolution kernel $K$
is scaled by using suitable approximations of a Dirac delta. 
More precisely, we consider the following family of convolution kernels, parametrized by a small positive parameter $\varepsilon$:
\begin{equation}\label{eq:kernel}
K_\varepsilon(x,y)=\frac{\varepsilon^{-d}}{|x-y|^2}\rho\left(\frac{|x-y|}{\varepsilon}\right),
\end{equation}
where $\rho:\mathbb{R}^+\to \mathbb{R}^+$ and $\rho$ is a nonnegative, decreasing, continuous function with compact support such that $\int_\Omega \rho(|z|)\, \xd z=\int_{\mathbb{R}^d}\rho(|z|)\, \xd z$.
A classical choice of $\rho$ is 
\[
  \rho(r):=\begin{cases}
  c e^{-\frac{1}{r_0^2-r^2}} \quad&\text{for } 0\leq r<r_0\,,\\
  0 \quad&\text{for } r\geq r_0\,,
  \end{cases}
\]
where $c$ is a renormalization constant and $r_0>0$.
With this choice, the convolution kernel that we consider is of the form
\[
  K_\varepsilon(x,y)=\frac{\rho_\varepsilon(|x-y|)}{|x-y|^2}\,,
\]
where $(\rho_\varepsilon)_\varepsilon$ is a suitable
family of mollifiers on $\mathbb{R}^d$.
It is well known that, with this choice for the kernel, the nonlocal energy functional $E_{NL} $ converges to the local one $E_{CH}$ pointwise in $H^1(\Omega)$, provided appropriate growth conditions on the potential $F$, see~\cite{BBM01,BN16}. Indeed the local term $\tau |\nabla u|^2$ can be obtained as the formal limit of the corresponding nonlocal terms with kernel~\eqref{eq:kernel}
as $\varepsilon \to 0$, where $\tau:=\frac{1}{2}\int_\Omega \rho(|z|) \xd z$, see~\cite{krejci2007nonlocal}. 

Note that, by denoting
\[
E_\varepsilon(u_\varepsilon(x)) := \tilde{E}_\varepsilon(u_ \varepsilon(x))+\int_\Omega F(u_\varepsilon(x))\xd x\,,
\]
where
\[
\tilde{E}_\varepsilon(u_ \varepsilon(x)):=
\frac{1}{4}\int_{\Omega}
\int_{\Omega}K(x,y)(u_\varepsilon(x)
-u_\varepsilon(y))^{2}\mathrm{d}
x\xd y\,,
\]
with kernel $K_\varepsilon$ as in~\eqref{eq:kernel} and $u_\varepsilon$ is the solution to the corresponding nonlocal Cahn-Hilliard equation, it is possible to show the uniform boundedness of the nonlocal energies $E_{\varepsilon}(u_\varepsilon)$. Taking advantage of a result by Ponce~\cite{ponce2004estimate} allows to obtain strong convergence of a (not relabelled) subsequence $u_\varepsilon$ in the $L^2$-topology to a limit $u \in H^1(\Omega )$. However, it is not clear whether strong convergence in $L^2$ suffices to pass to the limit. 
Moreover, the $\Gamma$-convergence cannot directly be deduced from the pointwise convergence of $E_{\varepsilon}$ in $H^1(\Omega)$, since the energy functional is non convex and the domain of $E_{\varepsilon}$ is larger than $H^1(\Omega)$ (it is $L^p(\Omega)$ with $p$ depending on the growth of the potential $F$) or, in other words, because of the lack of coercivity of $E_{\varepsilon}$ in $H^1(\Omega)$. Nevertheless, Ponce proved a result on $\Gamma$-convergence for the energy functionals, see~\cite{Po04}. Trying to approach the problem in the evolutionary setting following the method in the spirit of Sandier and Serfaty~\cite{SS04,Se11} is by far not trivial and beyond the goal of this paper. 

In order to overcome this problem we argue as follows. First we note that for every positive $\varepsilon$ solutions to the associated $H^{-1}$-gradient flow (namely of the nonlocal Cahn-Hilliard equation~\eqref{eq:NLCH}) belong to $H^1(\Omega)$ almost everywhere in time, although solutions to the stationary problem, i.e. minimizers of $E_\varepsilon$, cannot be guaranteed to belong to $H^1(\Omega)$. Moreover, by suitable choices of the test functions in the weak formulation of the equation~\eqref{eq:NLCH}, using a Poincar\'e-type inequality derived in~\cite{ponce2004estimate} we can prove uniform-in-$\varepsilon$ bounds on $u_\varepsilon$ in $H^1(\Omega)$ in the case of periodic boundary conditions. Furthermore, 
suitably applying a compactness inequality, we are able to prove also strong convergence in $H^1(\Omega)$, which allows us to pass to the weak limit in the equation. We finally note that, by using the uniqueness of solutions, the limit $u=\lim_{\varepsilon \to 0} u_\varepsilon$ can be proved to enjoy additional regularity ($H^2$ in space) and hence to be a weak solution to the local Cahn-Hilliard equation~\eqref{eq:CH}.\\

\section{Preliminaries and Main Result} \label{sec:solutions}
\indent 

In this paper we are interested in the convergence of solutions of the nonlocal Cahn-Hilliard equation~\eqref{eq:NLCH} to solutions of the local version~\eqref{eq:CH} in a periodic setting.\\ 

We start by enlisting our assumptions.
\begin{description}
\item [H1] $\Omega$ is the $d$-dimensional $(d=3)$ flat torus.
\item [H2] 
The kernel $K_\varepsilon$ is defined as in~\eqref{eq:kernel}, i.e.
\[
K_\varepsilon(x,y)=\tilde{K}_\varepsilon(x-y)=
\frac{\varepsilon^{-d}}{|x-y|^2}
\rho\left(\frac{|x-y|}{\varepsilon}\right),
\]
with
\[
\tau:=\frac{1}{2} \int_\Omega \rho(|z|)\, \xd z=1 
\]
and  $\rho:\mathbb{R}^+\to\mathbb{R}^+$ is a sufficiently smooth, nonnegative, decreasing and continuous function with compact support such that $\int_\Omega \rho(|z|)\, \xd z=\int_{\mathbb{R}^d}\rho(|z|)\, \xd z$.
\item[H3] $F\in C^2(\mathbb{R})$ is a double well potential with two global minima at $0$ and $1$ such that $F(0)=0=F(1)$, and 
\begin{align}
F''(r)&\geq -B_1\qquad\forall\,r\in\mathbb{R}\,,\\
|F''(r)|&\leq B_2(|r|^2+1)\qquad\forall\,r\in\mathbb{R},
\end{align}
for some constant $B_1,B_2>0$.
\item[H4] The initial data $u_{0,\varepsilon} \in L^2(\Omega)$ converges strongly in $L^2(\Omega)$ to the limit $u_0\in H^1(\Omega)$ and satisfies $E_\varepsilon(u_{0,\varepsilon}), E(u_0) \leq C_0$ for some constant $C_0>0$ independent of $\varepsilon$.
For example, if $(u_{0,\varepsilon})_\varepsilon\subset H^1(\Omega)$ and 
$u_{0,\varepsilon}\rightharpoonup u_0$ weakly in $H^1(\Omega)$
this condition is satisfied.
\end{description}

\begin{remark}
Note that for dimension $d=3$, the kernel $K_\varepsilon\in L^1(\Omega)$, so all the convolution terms appearing in the nonlocal equation are well defined. If $d=2$, the form of the convolution kernel implies that $K_\varepsilon$ does not belong to $L^1(\Omega)$. In particular $(K_\varepsilon*1)(x)=\int_\Omega \frac{\rho_\varepsilon(|z|)}{|z|^2}\, \xd z=\infty$ for any $x\in \Omega\,$ for $d=2$. However, even for $d=2$ the formulation of the nonlocal equation can be made rigorous by introducing a linear operator representing the nonlocality $\varphi \mapsto (K_\varepsilon*1)\varphi-K_\varepsilon*\varphi$ (see \cite{davoli2019degenerate} for details). In our paper, we restrict ourselves to dimension $d=3$ to avoid technicalities. 
\end{remark}

\begin{remark}
  Note that assumption \textbf{(H3)} is satisfied by the 
  $4$-th order polynomial double-well potential $F_P$
  mentioned in the introduction.
\end{remark}

\bigskip

Before stating our main result, let us recall the notion of weak solution to the nonlocal and local Cahn-Hilliard equation.

\begin{definition}[Weak solution to the nonlocal Cahn-Hilliard equation]\label{def:wsnl}
Let $\varepsilon >0 $ and $T>0$ be fixed. We define $u_\varepsilon$ to be a \emph{weak solution} to the nonlocal Cahn-Hilliard equation~\eqref{eq:NLCH} on $[0,T]$ associated with the initial datum $u_{0,\varepsilon} \in L^2(\Omega)$ if 
\[
u_\varepsilon \in H^1(0,T;(H^1(\Omega))^\ast) \cap L^2(0,T;H^1(\Omega)),
\]
satisfies 
\begin{align}\label{weak_NL}
\langle\partial_t u_\varepsilon, \varphi\rangle_{(H^1(\Omega))^*,H^1(\Omega)} + \int_\Omega \nabla[( K_\varepsilon * 1) u_\varepsilon -K_\varepsilon * u_\varepsilon +F^{\prime}(u_\varepsilon)]\cdot \nabla \varphi\, \xd x =0
\end{align}
for all $\varphi \in H^1(\Omega)$,
almost everywhere in $(0,T)$, and $u_\varepsilon (0) = u_{0,\varepsilon}$.
\end{definition}

\begin{definition}[Weak solution to the local Cahn-Hilliard equation]\label{def:wsl}
Let $T>0$ be fixed. We define $u$ to be a \emph{weak solution} to the Cahn-Hilliard equation~\eqref{eq:CH} on $[0,T]$ associated with the initial datum $u_0 \in H^1(\Omega)$ if
\[
u \in H^1(0,T;(H^1(\Omega))^\ast) \cap L^2(0,T;H^2(\Omega)),
\]
satisfies 
\begin{equation}\label{eq def weak sol ch}
\langle\partial_t u, \varphi\rangle_{(H^1(\Omega))^*,H^1(\Omega)}+ \int_\Omega \Delta u \Delta \varphi \, \xd x -\int_\Omega F^{\prime}(u) \Delta \varphi \, \xd x = 0
\end{equation}
for all $\varphi \in H^2(\Omega)$,
almost everywhere in $(0,T)$, and $u (0) = u_{0}$.
\end{definition}
\begin{remark}
Note that since $u_0\in H^1(\Omega)$, there is a unique solution $u$ to the local Cahn-Hilliard equation, which also satisfies the regularity $L^\infty(0,T;H^1(\Omega))\cap L^2(0,T;H^3(\Omega))$.\\
Moreover, let us point out that the solutions to both local and nonlocal Cahn-Hilliard equations do not satisfy $0\leq |u(x,t)|\leq 1$ for all $(x,t) \in \Omega \times (0,T)$.
\end{remark}
Existence and uniqueness of weak solutions to the local equation is well known with different choices for the boundary conditions: we mention, between others ~\cite{BE91, Ell89, EZ86, Mi17,NST89, NC08}. 
On the other hand, existence of solutions to the nonlocal equation is proven in the literature for kernels in $W^{1,1}(\Omega)$ (or satisfying analogous assumptions) ~\cite{BH, BH2, GZ, GL, Ji04}. Indeed, typically, in the process of deriving a-priori estimates, the convolution product is differentiated and the derivatives are placed on the term containing the kernel. This allows controlling $W^{1,p}$ norms of the convolution term with $W^{1,1}$ norms of the kernel and $L^p$ norms of $u$. In our case, however, taking advantage of the periodic boundary conditions and of the specific form of $K_\varepsilon$, $H^1(\Omega)$-estimates for $u$ can be derived without differentiating the kernel $K_\varepsilon$ (see Section \ref{ssec:estimates} for details). 
Additional estimates on the chemical potential can be derived from classical energy estimates for the Cahn-Hilliard equation. Thus, existence of solutions to \eqref{eq:NLCH} can be easily proved with our assumptions, e.g., via a classical Galerkin approximation scheme. A detailed proof of the existence results in a more general framework is given in \cite{davoli2019degenerate} (also for dimension $d=2$). 

The local as well as the nonlocal Cahn-Hilliard equation have been largely studied in the last years concerning for example qualitative properties~\cite{CMZ,gal2017nonlocal,LP2}, numerical aspects~\cite{BE92,GLWW14}, long-time behaviour~\cite{CMZ14, CG14,LP1} or asymptotics~\cite{AB, GNS, MM} with different kinds of boundary conditions and different potentials.\\

We now state our main result; the proof is shown in the following section.

\begin{theorem}[Convergence for weak solution in the periodic setting] \label{main thm}
Let assumptions \emph{\textbf{H1--H4}} be satisfied. Then, for $B_1$ sufficiently small, the weak solution $u_\varepsilon$ to the nonlocal Cahn-Hilliard equation~\eqref{eq:NLCH} converges strongly in $L^2(0,T;H^1(\Omega))\cap C([0,T]; (H^1(\Omega))^*)$ and weakly in $H^1(0,T;(H^1(\Omega))^*)$, to the weak solution of the local Cahn-Hilliard equation~\eqref{eq:CH}.
\end{theorem}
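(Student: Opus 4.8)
The plan is to run the standard energy method for the $H^{-1}$-gradient flow, but to extract all spatial compactness from the nonlocal Dirichlet energy via Ponce's inequalities, thereby bypassing the $\Gamma$-convergence/Sandier--Serfaty route. First I would derive the basic a priori estimates: testing the weak formulation~\eqref{weak_NL} with the nonlocal chemical potential $v_\varepsilon:=(K_\varepsilon*1)u_\varepsilon-K_\varepsilon*u_\varepsilon+F'(u_\varepsilon)$ (rigorously at the Galerkin level, since $F'(u_\varepsilon)$ need not lie in $H^1(\Omega)$) yields the dissipation identity
\[
E_\varepsilon(u_\varepsilon(t))+\int_0^t\int_\Omega|\nabla v_\varepsilon|^2\,\xd x\,\xd s=E_\varepsilon(u_{0,\varepsilon})\le C_0 .
\]
Since the equation is in divergence form the mean $\overline{u_\varepsilon}:=\frac{1}{|\Omega|}\int_\Omega u_\varepsilon$ is conserved, so I can bound $\int_\Omega F(u_\varepsilon)$ from below by $-\tfrac{B_1}{2}\|u_\varepsilon\|_{L^2}^2-C$ (using $F''\ge-B_1$) and combine this with Ponce's Poincar\'e inequality $\|u_\varepsilon-\overline{u_\varepsilon}\|_{L^2}^2\le C_P\,\tilde E_\varepsilon(u_\varepsilon)$. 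This is precisely where the smallness of $B_1$ enters: for $B_1$ below a threshold fixed by $C_P$, the nonlocal energy dominates the concave part of the potential, making $E_\varepsilon$ coercive uniformly in $\varepsilon$ and giving a uniform $L^2$ bound, hence a uniform bound on $\tilde E_\varepsilon(u_\varepsilon)$. The boundedness part of the Bourgain--Brezis--Mironescu/Ponce theory then upgrades this to a uniform bound for $u_\varepsilon$ in $H^1(\Omega)$, while $\|\partial_t u_\varepsilon\|_{(H^1)^*}\le\|\nabla v_\varepsilon\|_{L^2}$ yields a uniform bound in $L^2(0,T;(H^1(\Omega))^*)$.

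With these bounds in hand, Aubin--Lions--Simon provides a (not relabelled) subsequence converging strongly in $L^2(0,T;L^2(\Omega))\cap C([0,T];(H^1(\Omega))^*)$, weakly in $L^2(0,T;H^1(\Omega))$ and weakly in $H^1(0,T;(H^1(\Omega))^*)$ to some limit $u$. To identify $u$ I would pass to the limit using smooth test functions $\varphi$ and integration by parts on the torus, which moves all derivatives off the low-regularity terms: the nonlocal principal part becomes $-\int_\Omega u_\varepsilon\,L_\varepsilon(\Delta\varphi)\,\xd x$, where $L_\varepsilon\psi:=(K_\varepsilon*1)\psi-K_\varepsilon*\psi\to-\Delta\psi$ for smooth $\psi$ by the computation of~\cite{krejci2007nonlocal}, so this term converges to $\int_\Omega u\,\Delta^2\varphi\,\xd x$; the potential term, rewritten as $-\int_\Omega F'(u_\varepsilon)\Delta\varphi\,\xd x$, converges by the a.e.\ convergence of $u_\varepsilon$ together with the $L^2$ bound on $F'(u_\varepsilon)$ coming from $H^1(\Omega)\hookrightarrow L^6(\Omega)$ and the growth of $F'$. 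Lower semicontinuity of the nonlocal gradient energy (again BBM) shows $u\in L^2(0,T;H^2(\Omega))$, so the limit relation is exactly~\eqref{eq def weak sol ch}. Since~\eqref{eq:CH} has a unique weak solution for $u_0\in H^1(\Omega)$, the whole family converges.

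It remains to prove that the convergence of the gradients is strong in $L^2(0,T;L^2(\Omega))$, which is the delicate point and the only place where Ponce's compactness inequality (rather than his Poincar\'e inequality) is used. The key extra estimate is obtained by testing~\eqref{weak_NL} with $u_\varepsilon$ itself: since $\nabla\big[(K_\varepsilon*1)u_\varepsilon-K_\varepsilon*u_\varepsilon\big]=L_\varepsilon(\nabla u_\varepsilon)$, this produces
\[
\tfrac12\|u_\varepsilon(T)\|_{L^2}^2+\tfrac12\int_0^T\!\!\int_\Omega\!\!\int_\Omega K_\varepsilon(x,y)|\nabla u_\varepsilon(x)-\nabla u_\varepsilon(y)|^2\,\xd x\,\xd y\,\xd t+\int_0^T\!\!\int_\Omega F''(u_\varepsilon)|\nabla u_\varepsilon|^2\,\xd x\,\xd t=\tfrac12\|u_{0,\varepsilon}\|_{L^2}^2,
\]
and bounding the last integral from below by $-B_1\int_0^T\|\nabla u_\varepsilon\|_{L^2}^2\,\xd t$ (already controlled) gives a uniform bound on the space-time nonlocal Dirichlet energy of $\nabla u_\varepsilon$. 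Ponce's compactness inequality then forces the spatial translates of $\nabla u_\varepsilon$ to be equi-small, which is exactly the ingredient missing from the weak convergence.

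The main obstacle, which I would treat carefully, is that Ponce's compactness controls oscillations only in the space variable, whereas strong convergence is required in the space-time space $L^2(0,T;L^2(\Omega))$. I would close this gap by combining the uniform bound on the integrated nonlocal gradient energy with the time-equicontinuity inherited from $\partial_t u_\varepsilon\in L^2(0,T;(H^1(\Omega))^*)$ in a nonlocal Aubin--Lions--Simon argument (a Riesz--Fr\'echet--Kolmogorov criterion in which the spatial tightness is supplied by Ponce's inequality and the temporal tightness by the dual-norm bound on the time derivative). This upgrades the weak $L^2(0,T;H^1(\Omega))$ convergence to strong convergence of the gradients, completing the proof; the strong $C([0,T];(H^1(\Omega))^*)$ convergence and the weak $H^1(0,T;(H^1(\Omega))^*)$ convergence are already guaranteed by the compactness step above.
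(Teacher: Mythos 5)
Your proposal assembles most of the right ingredients, but the step by which you obtain the uniform $H^1(\Omega)$ bound is wrong, and this makes the core of the argument circular. The claim that the uniform bound on $\tilde{E}_\varepsilon(u_\varepsilon)$ (coming from the energy dissipation identity) together with the $L^2$ bound ``upgrades'' via Bourgain--Brezis--Mironescu/Ponce theory to a uniform bound on $\|u_\varepsilon\|_{H^1(\Omega)}$ is false. For fixed $\varepsilon>0$ and $d=3$ the kernel is integrable, so $\tilde{E}_\varepsilon(u)\leq \|\tilde{K}_\varepsilon\|_{L^1(\Omega)}\|u\|_{L^2(\Omega)}^2$ for \emph{every} $u\in L^2(\Omega)$: the domain of $\tilde{E}_\varepsilon$ is all of $L^2(\Omega)$, and the inequality $\|\nabla u\|_{L^2(\Omega)}^2\leq C\,\tilde{E}_\varepsilon(u)$ fails for every fixed $\varepsilon$ and every constant $C$ (take $u\in L^2(\Omega)\setminus H^1(\Omega)$, for which the right-hand side is finite and the left-hand side infinite; or smooth $u$ oscillating at frequencies $\gg\varepsilon^{-1}$, whose nonlocal energy saturates at order $\varepsilon^{-2}\|u\|_{L^2}^2$ while the Dirichlet energy is arbitrarily large). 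What the BBM/Ponce results actually give from $\sup_\varepsilon\tilde{E}_\varepsilon(u_\varepsilon)<\infty$ is only precompactness of $(u_\varepsilon)$ in $L^2(\Omega)$ and the fact that \emph{limit points} lie in $H^1(\Omega)$; this is precisely the ``lack of coercivity of $E_\varepsilon$ in $H^1(\Omega)$'' that the paper flags in the introduction as the reason the naive energy argument is insufficient. Consequently your Aubin--Lions step (which as stated needs the $L^2(0,T;H^1)$ bound), your $L^2$ bound on $F'(u_\varepsilon)$ (which needs $H^1\hookrightarrow L^6$), and above all the key estimate of your third paragraph, where the term $-B_1\int_0^T\|\nabla u_\varepsilon\|_{L^2}^2\,\xd t$ is dismissed as ``already controlled'', all rest on a bound you do not have; the last point is genuinely circular, because the identity obtained by testing with $u_\varepsilon$ is the \emph{only} available source of a uniform $H^1$ bound.

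The repair is exactly the paper's argument, and you already have every piece on the table: in the identity obtained by testing \eqref{weak_NL} with $u_\varepsilon$, the term $\int_\Omega F''(u_\varepsilon)|\nabla u_\varepsilon|^2\geq -B_1\|\nabla u_\varepsilon\|_{L^2}^2$ must be absorbed into the dissipation term by applying Ponce's Poincar\'e inequality \emph{to $\nabla u_\varepsilon$ rather than to $u_\varepsilon$} (admissible on the torus since $\overline{\nabla u_\varepsilon}=0$), i.e. inequality \eqref{ineq_ponce}; the smallness $B_1<\frac{1}{2C_p}$ enters here (not in the coercivity of $E_\varepsilon$, which is automatic since $F\geq0$ by \textbf{H3}), and this single step yields simultaneously and without circularity the uniform $L^2(0,T;H^1)$ bound and the uniform bound on $\int_0^T\int_\Omega\int_\Omega K_\varepsilon(x,y)|\nabla u_\varepsilon(x)-\nabla u_\varepsilon(y)|^2\,\xd x\,\xd y\,\xd t$, which are the two inputs all your later steps require. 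Granting that correction, two of your choices genuinely differ from the paper and are worth comparing. Your identification of the limit by moving the self-adjoint operator $L_\varepsilon$ onto smooth test functions, using $L_\varepsilon(\Delta\varphi)\to-\Delta^2\varphi$ and strong $L^2$ convergence, is a legitimate and more elementary alternative to the paper's convexity/subdifferential argument identifying $\xi=-\Delta u$, provided you also prove $u\in L^2(0,T;H^2(\Omega))$ (by lower semicontinuity applied to $\nabla u_\varepsilon$, as you indicate) so as to recover the formulation of Definition~\ref{def:wsl}. On the other hand, your final step is only a sketch: the paper makes the ``nonlocal Aubin--Lions'' idea precise via an Ehrling-type lemma, proved by contradiction from Ponce's compactness theorem, of the form $\|f_{\varepsilon_1}-f_{\varepsilon_2}\|_{H^1}^2\leq\delta\bigl(G_{\varepsilon_1}(\nabla f_{\varepsilon_1})+G_{\varepsilon_2}(\nabla f_{\varepsilon_2})\bigr)+C_\delta\|f_{\varepsilon_1}-f_{\varepsilon_2}\|_{L^2}^2$, where $G_\varepsilon(g):=\int_\Omega\int_\Omega K_\varepsilon(x,y)|g(x)-g(y)|^2\,\xd x\,\xd y$; combined with the strong $L^2(0,T;L^2)$ convergence and the uniform nonlocal gradient bound, this gives the Cauchy property in $L^2(0,T;H^1)$ directly, and is cleaner than a Riesz--Fr\'echet--Kolmogorov argument in space-time.
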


\begin{remark}
Note that the assumption ``$B_1$ sufficiently small" means $B_1<\frac{1}{2C_p}$, where $C_p$ is a constant depending on the dimension $d$ and on the domain $\Omega$ coming from a Poincar\'e type inequality derived in~\cite{ponce2004estimate} (see inequality~\eqref{ineq_ponce} below).
\end{remark}

\begin{remark}
\emph{\textbf{Theorem~\ref{main thm}}} is also valid for higher dimensions provided existence of solutions to the corresponding equations and assuming appropriate growth conditions on the potential $F$, i.e. modifying assumption \emph{\textbf{H3}} in order to be able to pass to the limit. In particular the new condition on $F$ would be 
\[
|F'(u)|^2\leq C(|u|^{2^*-2}+1)
\]
where $2^*$ is the critical Sobolev exponent for the embedding $H^1(\Omega)\hookrightarrow L^{2^*}(\Omega)$.
\end{remark}

\section{Proof of Convergence Result} \label{sec:periodic}
\indent 
In this section, we prove Theorem~\ref{main thm}. The proof is divided into several steps. 

\subsection{Uniform Estimates} \label{ssec:estimates}

We start by choosing $\varphi=u_\varepsilon$ as a test function in equation~\eqref{weak_NL}, obtaining
\begin{align*}
\frac{1}{2}\frac{\xd}{\xd t}\|u_\varepsilon\|_{L^2(\Omega)}^2+\int_\Omega \nabla \left[(K_\varepsilon*1)u_\varepsilon-K_\varepsilon*u_\varepsilon+F'(u_\varepsilon)\right]\cdot \nabla u_\varepsilon\, \xd x&=0.
\end{align*}
As a consequence of the periodic boundary conditions $K_\varepsilon*1$ is constant over the domain. Moreover, $\nabla (K_\varepsilon*u_\varepsilon)=K_\varepsilon*\nabla u_\varepsilon=\nabla K_\varepsilon*u_\varepsilon$. Thus, the equation above reduces to
\begin{align*}
\frac{1}{2}\frac{\xd}{\xd t}\|u_\varepsilon\|_{L^2(\Omega)}^2+\int_\Omega [ (K_\varepsilon*1 )|\nabla u_\varepsilon|^2-(K_\varepsilon*\nabla u_\varepsilon)\cdot\nabla u_\varepsilon+F''(u_\varepsilon)|\nabla u_\varepsilon|^2 ]\, \xd x&=0
\end{align*}
and we rewrite
\begin{align*}
&\int_\Omega [(K_\varepsilon*1) |\nabla u_\varepsilon|^2-(K_\varepsilon*\nabla u_\varepsilon)\cdot\nabla u_\varepsilon]\, \xd x\\
&\qquad=\frac{1}{2}\int_\Omega \int_\Omega K_\varepsilon(x,y)|\nabla u_\varepsilon(x)-\nabla u_\varepsilon(y)|^2\, \xd x\, \xd y.
\end{align*}

Using the definition of the kernel~\eqref{eq:kernel} we obtain
\begin{align}\label{basic_estimate}
&\frac{1}{2}\frac{\xd}{\xd t}\|u_\varepsilon\|_{L^2(\Omega)}^2+\frac{1}{2}\int_\Omega\int_\Omega \frac{\varepsilon^{-d}}{|x-y|^2}\rho\left(\frac{|x-y|}{\varepsilon}\right)|\nabla u_\varepsilon(x)-\nabla u_\varepsilon(y)|^2 \,\xd x\, \xd y \notag \\
&+\int_\Omega F''(u_\varepsilon)|\nabla u_\varepsilon|^2\, \xd x=0.
\end{align}
From \textbf{H3} and~\eqref{basic_estimate}, it follows that
\begin{align}
\frac{1}{2}\frac{\xd}{\xd t}\|u_\varepsilon\|_{L^2(\Omega)}^2&+\frac{1}{2}\int_\Omega\int_\Omega \frac{\varepsilon^{-d}}{|x-y|^2}\rho\left(\frac{|x-y|}{\varepsilon}\right)| \nabla u_\varepsilon(x)-\nabla u_\varepsilon(y)|^2 \,\xd x\, \xd y \notag
\\
\label{est1}
&\leq B_1 \|\nabla u_\varepsilon\|_{L^2(\Omega)}^2.
\end{align}

Using the Poincar\'e-type inequality in~\cite{ponce2004estimate}, we get for sufficiently small $\varepsilon$, recalling that $\overline{ \nabla u_\varepsilon}=0$,
\begin{align}\label{ineq_ponce}
\|\nabla u_\varepsilon \|_{L^2(\Omega)}^2&\leq C_p\int_\Omega\int_\Omega \varepsilon^{-d}\rho\left(\frac{|x-y|}{\varepsilon}\right)\frac{| \nabla  u_\varepsilon(x)- \nabla u_\varepsilon(y)|^2}{|x-y|^2 } \,\xd x\, \xd y,
\end{align}
where $C_p=C_p(d,\Omega)$ is a positive constant depending on the dimension $d\geq 1$, and on the domain $\Omega$, but independent of $\varepsilon$.\\

If $B_1<\frac{1}{2C_p}$, integrating \eqref{est1} in time yields for every $t\in [0,T]$
\begin{align*} 
\frac{1}{2}\|u_\varepsilon(t)\|_{L^2(\Omega)}^2&+\left(\frac{1}{2}-B_1C_p\right)\int_0^t \int_\Omega\int_\Omega \frac{\varepsilon^{-d}}{|x-y|^2}\rho\left(\frac{|x-y|}{\varepsilon}\right)| \nabla u_\varepsilon(x)-\nabla u_\varepsilon(y)|^2 \,\xd x\, \xd y \, \xd s \notag
\\
&\leq \frac{1}{2}\|u_0\|_{L^2(\Omega)}^2.
\end{align*}
Due to assumption \textbf{H4} on the initial data, recalling \eqref{ineq_ponce}, we get
\begin{align}
  \label{unif_1}
  \|u_\varepsilon\|_{L^\infty(0,T; L^2(\Omega))}
  +\|u_\varepsilon\|_{L^2(0,T; H^1(\Omega))}\leq C,\\
  \label{unif_3}
  \int_0^T\int_\Omega\int_\Omega \frac{\varepsilon^{-d}}{|x-y|^2}\rho\left(\frac{|x-y|}{\varepsilon}\right)|\nabla u_\varepsilon(x)-\nabla u_\varepsilon(y)|^2 \,\xd x\, \xd y \, \xd t\leq C
\end{align}
for some constant $C>0$ independent of $\varepsilon$.
Furthermore, from \eqref{unif_1} and \cite[Eq.~(5)]{ponce2004estimate} it also follows that
\begin{equation}
  \label{unif_2}
  \int_0^T\int_\Omega\int_\Omega \frac{\varepsilon^{-d}}{|x-y|^2}\rho\left(\frac{|x-y|}{\varepsilon}\right)|u_\varepsilon(x)- u_\varepsilon(y)|^2 \,\xd x\, \xd y \, \xd t\leq C\,.
\end{equation}
Now, setting
\[
B_\varepsilon(u)(x):=
(K_\varepsilon*1)u_\varepsilon-(K_\varepsilon*u_\varepsilon)=
\int_\Omega\varepsilon^{-d}\rho\left(\frac{|x-y|}{\varepsilon}\right)\frac{u_\varepsilon(x)-u_\varepsilon(y)}{|x-y|^2}\xd y,
\]
by the H\"older inequality 
and \cite[Eq.~(5)]{ponce2004estimate}
we get that for all $\psi\in H^1(\Omega)$,
\begin{align*}
&\langle B_\varepsilon(u_\varepsilon),\psi\rangle_{(H^{1}(\Omega))^*, H^1(\Omega)}=\frac{1}{2}\int_\Omega\int_\Omega K_\varepsilon(x,y)(u_\varepsilon(x)-u_\varepsilon(y))(\psi(x)-\psi(y))\xd y \xd x\\
&\leq\frac{1}{2}\Bigl(\int_\Omega\int_\Omega K_\varepsilon(x,y) |u_\varepsilon(x)-u_\varepsilon(y)|^2 \Bigr)^{1/2}\Bigl(\int_\Omega\int_\Omega K_\varepsilon(x,y) |\psi(x)-\psi(y)|^2 \Bigr)^{1/2}\\
&\leq C\Vert \nabla u_\varepsilon\Vert_{L^2(\Omega)}\Vert \nabla \psi\Vert_{L^2(\Omega)},
\end{align*}
where $C>0$ is some constant independent of $\varepsilon$ and $\psi$. Hence by \eqref{unif_1}
we also obtain  
\[
\Vert B_\varepsilon(u_\varepsilon)\Vert_{L^2(0,T;(H^{1}(\Omega))^*)}\leq C.
\]

Finally, testing the equation~\eqref{weak_NL}
by $(-\Delta)^{-1}\partial_t u_\varepsilon$
immediately yields, after integration in time,
\[
  \int_0^T\|\partial_t u_\varepsilon(t)\|_{(H^1(\Omega))^*}^2\,\xd t
  +E_\varepsilon(u_\varepsilon(T))
  =E_\varepsilon(u_{0,\varepsilon}),
\]
which implies thanks to \textbf{H4} that
\[
  \|\partial_t u_\varepsilon\|_{L^2(0,T;(H^1(\Omega))^*)}\leq C.
\]

\subsection{Convergence} \label{ssec:convergence}
Let $u_\varepsilon$ be the weak solution to the nonlocal Cahn-Hilliard equation~\eqref{eq:NLCH}.
Thanks to the uniform bounds derived above and the classical Aubin-Lions and Simon compactness results
(see \cite{simon}), we have the following convergences for the (not relabelled) subsequences:
\begin{align}
& u_\varepsilon \rightarrow u & \text{ strongly in } L^2(0,T;L^2(\Omega))\cap C^0([0,T]; (H^1(\Omega))^*) \label{strong_conv_u}, \\
& \partial_t u_\varepsilon \rightharpoonup \partial_t u & \text{ weakly}^*\text{ in } L^{2}(0,T;(H^1(\Omega))^\ast),\label{weak_conv_u_t}\\
& u_\varepsilon \rightharpoonup u & \text{ weakly* in } 
L^\infty(0,T; L^2(\Omega))\cap L^2(0,T; H^1(\Omega)),\\
& B_\varepsilon(u_\varepsilon)\rightharpoonup\xi & \text{ weakly* in } L^2(0,T;(H^1(\Omega))^*),
\label{weak_B}
\end{align}
for some $u \in L^2(0,T;H^1(\Omega)) \cap H^1(0,T;(H^1(\Omega))^\ast)$
and $\xi\in L^2(0,T; (H^1(\Omega))^*)$.

Let us show now that it also holds that 
\begin{equation}\label{strong_H1}
    u_\varepsilon \to u \quad\text{strongly in }
    L^2(0,T; H^1(\Omega)).
\end{equation}
In order to prove \eqref{strong_H1} we need the 
following lemma.

\begin{lemma}
For every $\delta>0$ there exist constants $C_{\delta}>0$ and $\varepsilon_\delta>0$ 
such that for every sequence $(f_\varepsilon)_\varepsilon\subset H^1(\Omega)$ there holds
\begin{align*}
   \label{eq:comp1}
    \|f_{\varepsilon_1}-f_{\varepsilon_2}\|^2_{H^1(\Omega)}&\leq \delta 
   \int_{\Omega}\int_{\Omega} \frac{\varepsilon_1^{-d}}{|x-y|^2}\rho\left(\frac{|x-y|}{\varepsilon_1}\right)
  |\nabla f_{\varepsilon_1}(x)-
  \nabla f_{\varepsilon_1}(y)|^2\xd x\xd y\\
  & +\delta 
   \int_{\Omega}\int_{\Omega} \frac{\varepsilon_2^{-d}}{|x-y|^2}\rho\left(\frac{|x-y|}{\varepsilon_2}\right)
  |\nabla f_{\varepsilon_2}(x)-
  \nabla f_{\varepsilon_2}(y)|^2\xd x\xd y\\
  &+C_{\delta}
  \|f_{\varepsilon_1}-f_{\varepsilon_2}\|^2_{L^2(\Omega)}
\end{align*}
for every pair $0<\varepsilon_1, \varepsilon_2 <\varepsilon_\delta$.
\end{lemma}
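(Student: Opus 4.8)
The plan is to pass to the Fourier side, which is natural since $\Omega$ is a flat torus: expanding $f=\sum_{k\in\mathbb{Z}^d}\hat f(k)e^{ik\cdot x}$ in an orthonormal basis, one has $\|f\|_{L^2(\Omega)}^2=\sum_k|\hat f(k)|^2$, $\|f\|_{H^1(\Omega)}^2=\sum_k(1+|k|^2)|\hat f(k)|^2$, and — crucially — the nonlocal energies diagonalize. Substituting $z=x-y$, using translation invariance, and applying Parseval in $x$, I would first establish the identity
\[
\int_\Omega\int_\Omega\frac{\varepsilon^{-d}}{|x-y|^2}\rho\Bigl(\tfrac{|x-y|}{\varepsilon}\Bigr)|\nabla f(x)-\nabla f(y)|^2\,\xd x\,\xd y=\sum_{k\in\mathbb{Z}^d}m_\varepsilon(k)\,|k|^2|\hat f(k)|^2,
\]
where $m_\varepsilon(k)=\int_\Omega\varepsilon^{-d}|z|^{-2}\rho(|z|/\varepsilon)\,4\sin^2(k\cdot z/2)\,\xd z$ (using $|1-e^{-ik\cdot z}|^2=4\sin^2(k\cdot z/2)$). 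Since $\rho$ has compact support, for $\varepsilon$ below the injectivity radius the integral is Euclidean, and the scaling $w=z/\varepsilon$ gives $m_\varepsilon(k)=\varepsilon^{-2}\Psi(\varepsilon k)$ with $\Psi(\xi)=4\int_{\mathbb{R}^d}\rho(|w|)\,|w|^{-2}\sin^2(\xi\cdot w/2)\,\xd w$. Note $\Psi$ is bounded, so $m_\varepsilon(k)\leq C\varepsilon^{-2}$; this makes both sides finite for $H^1$ data, consistent with the hypothesis $(f_\varepsilon)_\varepsilon\subset H^1(\Omega)$.

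The crux is a uniform lower bound on the multiplier, namely $\Psi(\xi)\geq c_0\min(|\xi|^2,1)$ for some $c_0>0$, equivalently $m_\varepsilon(k)\geq c_0\min(|k|^2,\varepsilon^{-2})$. Near the origin, $\sin^2(\xi\cdot w/2)\sim(\xi\cdot w/2)^2$ yields $\Psi(\xi)=c_\rho|\xi|^2+o(|\xi|^2)$ with $c_\rho=\tfrac1d\int_{\mathbb{R}^d}\rho(|w|)\,\xd w>0$, so $\Psi(\xi)/|\xi|^2$ extends to a continuous strictly positive function on the closed unit ball and is bounded below there. For $|\xi|\geq1$ I would write $\sin^2=(1-\cos)/2$ and invoke the Riemann–Lebesgue lemma — here using $\rho(|w|)/|w|^2\in L^1(\mathbb{R}^3)$, which is exactly where $d=3$ enters — to obtain $\Psi(\xi)\to 2\int_{\mathbb{R}^d}\rho(|w|)|w|^{-2}\,\xd w>0$ as $|\xi|\to\infty$; together with continuity and the strict positivity of $\Psi$ away from the origin, this gives $\inf_{|\xi|\geq1}\Psi>0$.

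With this bound, the conclusion is bookkeeping. Setting $g=f_{\varepsilon_1}-f_{\varepsilon_2}$ and choosing $R$ and $\varepsilon_\delta$ so that $R^2=\varepsilon_\delta^{-2}=2/(c_0\delta)$, I split the series at $|k|=R$. On $\{|k|\leq R\}$ one has $1+|k|^2\leq1+R^2$, producing the term $C_\delta\|g\|_{L^2(\Omega)}^2$ with $C_\delta=1+R^2$. On $\{|k|>R\}$ one has $1+|k|^2\leq2|k|^2$ and $|\hat g(k)|^2\leq2\bigl(|\hat f_{\varepsilon_1}(k)|^2+|\hat f_{\varepsilon_2}(k)|^2\bigr)$; moreover, for $|k|>R$ and $\varepsilon_j<\varepsilon_\delta$ the choice of $R,\varepsilon_\delta$ forces $m_{\varepsilon_j}(k)\geq2/\delta$, so that
\[
\sum_{|k|>R}|k|^2|\hat f_{\varepsilon_j}(k)|^2\leq\frac{\delta}{2}\sum_{k}m_{\varepsilon_j}(k)|k|^2|\hat f_{\varepsilon_j}(k)|^2,
\]
i.e. half of the $j$-th nonlocal energy by the identity of the first step. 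Collecting the low- and high-frequency contributions reproduces exactly the right-hand side of the lemma.

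The only genuine difficulty is the lower bound of the second step, and within it the behaviour of $\Psi$ at infinity: one must rule out that the multiplier degenerates at high frequencies, which rests on the $L^1$-integrability of $\rho(|w|)/|w|^2$ and hence on $d=3$, in agreement with the standing assumptions. Everything else — the diagonalization, the near-origin asymptotics, and the frequency splitting — is routine once this is secured.
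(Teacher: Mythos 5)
Your proof is correct, but it takes a genuinely different route from the paper's. The paper argues by contradiction and compactness: assuming the inequality fails with constants $n\to\infty$ and parameters $\varepsilon_{1n},\varepsilon_{2n}<\tfrac1n$, it normalizes by $\|f_{\varepsilon_{1n}}-f_{\varepsilon_{2n}}\|_{H^1(\Omega)}$, observes that the normalized nonlocal energies stay bounded while the $L^2$-norm of the difference vanishes, and then invokes Ponce's compactness theorem \cite[Thm.~1.2]{ponce2004estimate} (combined with the nonlocal Poincar\'e inequality, which gives $L^2$-bounds on the individual gradients) to deduce that the difference tends to $0$ strongly in $H^1(\Omega)$, contradicting the normalization. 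You instead diagonalize the nonlocal energy in Fourier space --- which is available precisely because $\Omega$ is a flat torus and the kernel is a radial function of $x-y$ --- and prove the pointwise multiplier bound $m_\varepsilon(k)\geq c_0\min(|k|^2,\varepsilon^{-2})$, after which the lemma is a frequency splitting. What each approach buys: yours is quantitative and essentially self-contained, producing explicit constants $C_\delta=1+O(1/\delta)$, $\varepsilon_\delta=O(\sqrt{\delta})$, with $d=3$ entering exactly where you say (integrability of $\rho(|w|)/|w|^2$, needed for the Riemann--Lebesgue step), whereas the paper's constants are purely existential; conversely, the paper's soft argument uses neither translation invariance nor the radial structure, so it would survive in settings (non-periodic domains, non-convolution kernels) where your diagonalization is unavailable, at the price of leaning on Ponce's compactness theorem as a black box. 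Two trivial bookkeeping points in your write-up: with $R^2=\varepsilon_\delta^{-2}=2/(c_0\delta)$ the high-frequency estimate actually yields $2\delta$ (not $\delta$) in front of each nonlocal energy, since you pick up a factor $2$ from $1+|k|^2\leq 2|k|^2$ and another from $|\hat g(k)|^2\leq 2(|\hat f_{\varepsilon_1}(k)|^2+|\hat f_{\varepsilon_2}(k)|^2)$ --- fixed by running the argument with $\delta/4$ in place of $\delta/2$; and $\varepsilon_\delta$ must additionally be taken below the fixed geometric threshold ensuring the scaled kernel is supported in a fundamental domain, so that the torus integral defining $m_\varepsilon(k)$ is indeed the Euclidean one used in your scaling $m_\varepsilon(k)=\varepsilon^{-2}\Psi(\varepsilon k)$. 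Neither affects the validity of the argument.
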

\begin{proof}
By contradiction suppose that there exists a $\bar{\delta}>0$ such that for every $n\in \mathbb{N}$ there is a sequence $(f_{\varepsilon}^n)_{\varepsilon}\subset H^1(\Omega)$ and parameters $\varepsilon_{1n},\varepsilon_{2n}<\frac1n$ such that
\begin{align*}
  \|f_{\varepsilon_{1n}}^n-f_{\varepsilon_{2n}}^n\|^2_{H^1(\Omega)} &> 
  \bar\delta
  \int_\Omega\int_\Omega \frac{\varepsilon_{1n}^{-d}}{|x-y|^2}\rho\left(\frac{|x-y|}{\varepsilon_{1n}}\right)|\nabla f_{\varepsilon_{1n}}^n(x)-\nabla f_{\varepsilon_{1n}}^n(y)|^2 \,\xd x\, \xd y\\
  &+\bar\delta
  \int_\Omega\int_\Omega \frac{\varepsilon_{2n}^{-d}}{|x-y|^2}\rho\left(\frac{|x-y|}{\varepsilon_{2n}}\right)|\nabla f_{\varepsilon_{2n}}^n(x)-\nabla f_{\varepsilon_{2n}}^n(y)|^2 \,\xd x\, \xd y\\
   &+n\|f_{\varepsilon_{1n}}^n-f_{\varepsilon_{2n}}^n\|^2_{L^2(\Omega)}\,.
\end{align*}
Noting that $\|f_{\varepsilon_{1n}}^n-f_{\varepsilon_{2n}}^n\|_{H^1(\Omega)}>0$
for every $n$ and setting
\[
g_{1n}:=\frac{f_{\varepsilon_{1n}}}{\|f_{\varepsilon_{1n}}-f_{\varepsilon_{2n}}\|_{H^1(\Omega)}}\,, \qquad g_{2n}:=\frac{f_{\varepsilon_{2n}}}{\|f_{\varepsilon_{1n}}-f_{\varepsilon_{2n}}\|_{H^1(\Omega)}}\,,
\]
we have
\begin{align*}
\bar\delta
  &\int_\Omega\int_\Omega \frac{\varepsilon_{1n}^{-d}}{|x-y|^2}
  \rho\left(\frac{|x-y|}{\varepsilon_{1n}}\right)
  |\nabla g_{1n}(x)-\nabla g_{1n}(y)|^2
  \,\xd x\, \xd y \\
  &+\bar\delta
  \int_\Omega\int_\Omega \frac{\varepsilon_{2n}^{-d}}{|x-y|^2}
  \rho\left(\frac{|x-y|}{\varepsilon_{2n}}\right)
  |\nabla g_{2n}(x)-\nabla g_{2n}(y)|^2
  \,\xd x\, \xd y
  + n \|g_{1n}-g_{2n}\|_{L^2(\Omega)}^2<1
  \quad\forall\,n\in\mathbb{N}.
\end{align*}
Such inequality immediately yields that $g_{1n}-g_{2n}\to0$ strongly in $L^2(\Omega)$
and the families $(\nabla g_{1n})_n$ and $(\nabla g_{2n})_n$
are relatively strongly compact in $L^2(\Omega;\mathbb{R}^d)$ by \cite[Thm.~1.2]{ponce2004estimate}.
We deduce that $g_{1n}-g_{2n}\to0$ strongly in
$H^1(\Omega)$, but this is a contradiction
since by definition we have $\|g_{1n}-g_{2n}\|_{H^1(\Omega)}=1$ for all $n$. 
\end{proof}

From the previous lemma it follows that for every 
$\delta>0$, there is $C_\delta >0$ and $\varepsilon_\delta>0$ 
such that for every pair 
$0<\varepsilon_1,\varepsilon_2 <\varepsilon_\delta$ such that 
\begin{align*}
  &
  \|u_{\varepsilon_1}-
  u_{\varepsilon_2}\|_{L^2(0,T;H^1(\Omega))}^2\\
  &\leq \delta\int_0^T
  \int_\Omega\int_\Omega \frac{\varepsilon_1^{-d}}{|x-y|^2}\rho\left(\frac{|x-y|}{\varepsilon_1}\right)|\nabla (u_{\varepsilon_1})(x)-\nabla (u_{\varepsilon_1})(y)|^2 \,\xd x\, \xd y\\
    &+ \delta\int_0^T
  \int_\Omega\int_\Omega \frac{\varepsilon_2^{-d}}{|x-y|^2}\rho\left(\frac{|x-y|}{\varepsilon_2}\right)|\nabla (u_{\varepsilon_2})(x)-\nabla (u_{\varepsilon_2})(y)|^2 \,\xd x\, \xd y\\
  &+C_\delta
  \|u_{\varepsilon_1}-
  u_{\varepsilon_2}\|_{L^2(0,T;L^2(\Omega))}^2.
\end{align*}
Thanks to the estimate \eqref{unif_3}, we infer that 
\[
  \|u_{\varepsilon_1}-u_{\varepsilon_2}\|_{L^2(0,T;H^1(\Omega))}^2
  \leq 2C\delta + C_\delta\|u_{\varepsilon_1}-u_{\varepsilon_2}\|_{L^2(0,T;L^2(\Omega))}^2
\]
for a certain constant $C>0$. Since $\delta$ is arbitrary 
and we already know that $u_\varepsilon\to u$ strongly in $L^2(0,T; L^2(\Omega))$, the strong convergence \eqref{strong_H1}
is proved.

We now prove the limit $u$ to be a weak solution of the local Cahn-Hilliard equation~\eqref{eq:CH}. We start from the Definition~\ref{def:wsnl} of weak solution for the nonlocal Cahn-Hilliard and we test it with a function $\varphi \in C^\infty([0,T]\times\Omega)$. By integrating by parts, we get 
\[
\int_0^T\int_\Omega (\partial_t u_\varepsilon) \varphi \, \xd x\, \xd t- \int_0^T \int_\Omega [(K_\varepsilon * 1) u_\varepsilon -K_\varepsilon * u_\varepsilon +F^{\prime}(u_\varepsilon)]\Delta \varphi\, \xd x\, \xd t =0.
\]

Now, let us show that $F'(u_\varepsilon)\to F'(u)$
in $L^1(0,T; L^1(\Omega))$. 
Indeed, by \textbf{H3}, the mean value theorem and the H\"older inequality we have that
\begin{align*}
&\int_0^T\int_\Omega|F'(u_\varepsilon)-F'(u)|
\leq
C\int_0^T\int_\Omega(1+|u|^2+|u_\varepsilon|^2)|u_\varepsilon-u|\\
&\leq
C\int_0^T\|u_\varepsilon-u\|_{L^6(\Omega)}
(1+\|u\|_{L^{12/5}(\Omega)}^2
+\|u_\varepsilon\|_{L^{12/5}(\Omega)}^2)\\
&\leq C\|u_\varepsilon-u\|_{L^2(0,T; H^1(\Omega))}\left(1+
\|u\|_{L^4(0,T; L^3(\Omega))}^2
+\|u_\varepsilon\|_{L^4(0,T; L^3(\Omega))}^2\right).
\end{align*}
The term in bracket on the right-hand side is bounded uniformly in $\varepsilon$
since by interpolation we have
\begin{align*}
  \|u_\varepsilon\|_{L^4(0,T; L^3(\Omega))}^2&\leq
  C\left(\int_0^T
  \|u_\varepsilon(t)\|^{2}_{L^6(\Omega)}
  \|u_\varepsilon\|^2_{L^2(\Omega)}\right)^{1/2}\\
  &\leq C\|u_\varepsilon\|_{L^\infty(0,T; L^2(\Omega))}\|u_\varepsilon\|_{L^2(0,T; H^1(\Omega))},
\end{align*}
and by
the estimate \eqref{unif_1}. Hence,
$F'(u_\varepsilon)\to F'(u)$
in $L^1(0,T; L^1(\Omega))$ by \eqref{strong_H1}.

Now,
by the convergence results above, we can pass to the limit in the variational formulation
and get
\[
\int_0^T\langle\partial_t u,\varphi\rangle_{(H^1(\Omega))^*,H^1(\Omega)} \, \xd t- 
\int_0^T 
\langle\xi,\Delta\varphi\rangle_{(H^1(\Omega))^*,H^1(\Omega)}\, \xd t
-\int_0^T\int_\Omega F^{\prime}(u)\Delta \varphi\, \xd x\, \xd t =0.
\]
It remains to identify the limit $\xi$ as $-\Delta u$.
To this end, we note that the variational derivative of the convex energy part
is given by
\[
\frac{\partial \tilde{E}_\varepsilon}{\partial u} = B_\varepsilon(u)
\]
and use the definition of subdifferential that reads
\begin{align}\label{eq:convex}
\tilde{E}_\varepsilon(z_1)+\langle B_\varepsilon(z_1),z_2-z_1\rangle_{(H^1(\Omega))^*,H^1(\Omega)} &\leq \tilde{E}_\varepsilon (z_2),
\end{align}
for all $z_1,z_2\in H^1(\Omega)$.
Hence, for all $z\in L^2(0,T; H^1(\Omega))$ we have that~\eqref{eq:convex} reads
\begin{equation}
    \int_0^T\tilde{E}_\varepsilon(u_\varepsilon)+\int_0^T\langle B_\varepsilon(u_\varepsilon),z-u_\varepsilon\rangle_{(H^{1}(\Omega))^*, H^1(\Omega)} \leq \int_0^T\tilde{E}_\varepsilon(z).
\end{equation}
The right hand side converges to $\int_0^T\tilde{E}(z)$ by~\cite[Eqs.~(3)\text{ and} (5)]{ponce2004estimate}
and the dominated convergence theorem, 
where 
\[
\tilde{E}(\cdot)=\frac12\int_\Omega|\nabla \cdot|^2\xd x.
\]
Moreover, thanks to the strong convergence~\eqref{strong_H1} and the 
weak convergence \eqref{weak_B}, we have that 
\[
\int_0^T\langle B_\varepsilon(u_\varepsilon),z-u_\varepsilon\rangle_{(H^{1}(\Omega))^*, H^1(\Omega)} \to 
\int_0^T\langle \xi,z-u\rangle_{(H^{1}(\Omega))^*, H^1(\Omega)}.
\] 
Next, we want to show that $\tilde{E}_\varepsilon(u_\varepsilon)\rightarrow \tilde{E}(u)$ in $L^1(0,T)$. 
In order to get this we first note that
\begin{align*}
&\tilde{E}_\varepsilon(u_\varepsilon)-\tilde{E}_\varepsilon(u)\\
&\leq \frac{1}{4}\int_\Omega\int_\Omega K_\varepsilon(x,y)\Bigl( (u_\varepsilon-u)(x)-(u_\varepsilon-u)(y)\Bigl)\Bigl( (u_\varepsilon+u)(x)-(u_\varepsilon+u)(y)\Bigl)\\
&\leq \frac{1}{4}\Bigl(\int_\Omega\int_\Omega K_\varepsilon(x,y) |(u_\varepsilon-u)(x)-(u_\varepsilon-u)(y)|^2 \Bigr)^{1/2}\\
&\quad\quad\Bigl(\int_\Omega\int_\Omega K_\varepsilon(x,y) |(u_\varepsilon+u)(x)-(u_\varepsilon+u)(y)|^2 \Bigr)^{1/2}\\
&\leq C\Vert \nabla (u_\varepsilon-u)\Vert_{L^2(\Omega)}\Vert \nabla (u_\varepsilon+u)\Vert_{L^2(\Omega)},
\end{align*}
where the first term converges to zero 
in $L^2(0,T)$ and the second one is bounded
in $L^2(0,T)$.
Hence $\tilde{E}_\varepsilon(u_\varepsilon)-\tilde{E}_\varepsilon(u)\to 0$ in $L^1(0,T)$,
and writing
\[
\tilde{E}_\varepsilon(u_\varepsilon)-\tilde{E}(u) =\Bigl( \tilde{E}_\varepsilon(u_\varepsilon)-\tilde{E}_\varepsilon(u)\Bigr) + \Bigl( \tilde{E}_\varepsilon(u)-\tilde{E}(u)\Bigr)\rightarrow 0
\]
we get the desired convergence as the second term on the right-hand side goes to zero in $L^1(0,T)$ thanks 
again to~\cite{ponce2004estimate}
and the dominated convergence theorem.

Hence, letting $\varepsilon\to0$
in the definition of the subdifferential of $\tilde{E}$, we deduce that 
\[
\int_0^T\tilde E(u)
+\int_0^T\langle \xi,z-u\rangle_{(H^{1}(\Omega))^*, H^1(\Omega)}
\leq \int_0^T\tilde E(z)
\]
for every $z\in L^2(0,T; H^1(\Omega))$, so that
$\xi=\partial \tilde{E}(u)=-\Delta u\in L^2(0,T;(H^1(\Omega))^*)$.

Thus, the limit $u$ satisfies an
equivalent integrated-in-time
variational formulation of the local
problem, that reads
\[
\int_0^T\int_\Omega (\partial_t u) \varphi \, \xd x\, \xd t- \int_0^T \int_\Omega \nabla u\cdot \nabla\Delta \varphi \, \xd x\, \xd t-\int_0^T \int_\Omega F^{\prime}(u)\Delta \varphi \, \xd x\, \xd t = 0
\]
for all $\varphi\in C^\infty([0,T]\times \Omega)$.
Note that the limit $u$ belongs to $L^2(0,T;H^2(\Omega))$ thanks to the result of Ponce~\cite{ponce2004estimate},
hence
we can integrate the second term by parts getting that $u$ satisfies equation~\eqref{eq def weak sol ch} in the sense of definition \ref{def:wsl}. 
This convergence result together with assumption \textbf{H4} and the density of $C^\infty(\Omega)$ in $H^2(\Omega)$ implies that $u$ is a weak solution to the Cahn-Hilliard equation~\eqref{eq:CH} and concludes the proof of Theorem~\ref{main thm}.

\section{Conclusions and Further Remarks}
\indent 
In this paper we proved the convergence of weak solutions of the nonlocal Cahn-Hilliard equation~\eqref{eq:NLCH} to weak solutions of the local version~\eqref{eq:CH} as the convolution kernel $K$ approximates a Dirac delta in the case of periodic boundary conditions for dimension $d=3$. These conditions are physically relevant since the nonlocal energy functional has been derived starting from a lattice structure in the periodic setting, see~\cite{GL}. 
The proof uses the dynamic structure to obtain the appropriate estimates and regularity results. Moreover, an important key point in the proof is the application of an inequality in the spirit of Poincar\'e~\cite{ponce2004estimate} and the definition of subdifferential.\\

A natural question would be to investigate the case with other boundary conditions, as Dirichlet or Neumann conditions. Typically, boundary conditions (e.g. of Neumann type) for the local Cahn-Hilliard are imposed on the chemical potential $v$ as well as on $u$, whereas for the nonlocal Cahn-Hilliard are imposed only on $v$. 
Hence, it is not clear if, in the passage to the limit $\varepsilon \to 0$, (any type of) boundary conditions for $u$ has to be expected at all. Technically, this is related to difficulties in deriving uniform $H^1(\Omega)$ estimates and in proving $H^2$ regularity of the limit due to hard-to-handle boundary terms appearing when performing integration by parts.

\section*{Acknowledgements}

S.M.~was supported by the Austrian Science Fund (FWF) project P27052. H.R.~was funded by the Austrian Science Fund (FWF) project F 65. 
L.S.~was funded by Vienna Science and Technology Fund (WWTF) through Project MA14-009.
L.T.~acknowledges partial support from the Austrian Science Fund (FWF) projects F 65 and P27052.\\

\end{document}